\crefname{equation}{}{}
\Crefname{equation}{Equation}{Equations}
\crefname{theorem}{Theorem}{Theorems}
\Crefname{theorem}{Theorem}{Theorems}
\crefname{lemma}{Lemma}{Lemmas}
\Crefname{lemma}{Lemma}{Lemmas}
\crefname{proposition}{Proposition}{Propositions}
\Crefname{proposition}{Proposition}{Propositions}
\crefname{corollary}{Corollary}{Corollaries}
\Crefname{corollary}{Corollary}{Corollaries}
\crefname{conjecture}{Conjecture}{Conjectures}
\Crefname{conjecture}{Conjecture}{Conjectures}
\crefname{section}{Section}{Sections}
\Crefname{section}{Section}{Sections}
\crefname{example}{Example}{Examples}
\Crefname{example}{Example}{Examples}
\crefname{problem}{Problem}{Problems}
\Crefname{problem}{Problem}{Problems}
\crefname{remark}{Remark}{Remarks}
\Crefname{remark}{Remark}{Remarks}
\crefname{figure}{Figure}{Figures}
\Crefname{figure}{Figure}{Figures}
\crefname{question}{Question}{Questions}
\Crefname{question}{Question}{Questions}
\newcommand{\Z}{\mathbb{Z}}
\newtheorem{theorem}{Theorem}
\newtheorem{proposition}[theorem]{Proposition}
\newtheorem{corollary}[theorem]{Corollary}
\newcommand{\arxiv}[1]{\href{http://arxiv.org/abs/#1}{\texttt{arXiv:#1}}}
\title{Approximate {S}teiner $(r-1,r,n)$-systems 
without $3$ blocks on $r+2$ points}
\author{Alexander Sidorenko\\ 
\small\tt sidorenko.ny@gmail.com}
\begin{document}

\maketitle

\begin{abstract}
For a family ${\mathcal F}$ of $r$-graphs, 
let $\mathrm{ex}(n,{\mathcal F})$ denote 
the maximum number of edges 
in an ${\mathcal F}$-free $r$-graph on $n$ vertices. 
Let ${\mathcal F}_r(v,e)$ denote the family 
of all $r$-graphs with $e$ edges and at most $v$ vertices. 
We prove that 
$\mathrm{ex}(n,{\mathcal F}_r(r+1,2) \cup {\mathcal F}_r(r+2,3)) =
(\frac{1}{r} - o(1)) \binom{n}{r-1}$.
\end{abstract}

Let ${\mathcal F}$ be a family of $r$-graphs, 
(that is, $r$-uniform hypergraphs). 
An $r$-graph is called ${\mathcal F}$-{\it free} 
if it does not have a subgraph isomorphic to a member of 
${\mathcal F}$. 
Let $\mathrm{ex}(n,{\mathcal F})$ denote 
the maximum number of edges 
in an ${\mathcal F}$-free $r$-graph on $n$ vertices. 
Let ${\mathcal F}_r(v,e)$ denote the family 
of all $r$-graphs with $e$ edges and at most $v$ vertices. 
In 1973, Brown, Erd\H{o}s and S\'{o}s 
initiated the study of 
$f_r(n,v,e) := \mathrm{ex}(n,{\mathcal F}_r(v,e))$. 
The case $r=2$ had been studied before. 
In particular, Erd\H{o}s \cite{Erdos:1964} proved 
$f_2(n,e+1,e) = \left\lfloor \frac{e-1}{e} n \right\rfloor$.
Cases when the ratio $(er-v)/(e-1)$ is not integer 
were studied in 
\cite{Alon:2006,Erdos:1986,Ge:2017,Ruzsa:1978,Sarkozy:2004,Sarkozy:2005,Shangguan:2019a}.
In the case when $k = (er-v)/(e-1)$ is an integer,
Brown, Erd\H{o}s and S\'{o}s \cite{Brown:1973} 
were able to prove 
\[
  (c_{r,v,e}+o(1)) \: n^k
    \; \leq \;
  f_r(n,v,e)
    \; \leq \;
  (C_{r,v,e}+o(1)) \: n^k
    \;\;\;\;{\rm as}\;\; n \to \infty \: .
\]
The case $e=2$ is equivalent to the packing problem 
and was asymptotically solved by R\"{o}dl \cite{Rodl:1985} 
who proved 
\begin{equation}\label{eq:packing}
  f_r(n,v,2) \; = \; 
  \left(\binom{r}{2r-v}^{-1} \! + o(1)\right) \binom{n}{2r-v}
    \;\;\;\;{\rm as}\;\; n \to \infty \: .
\end{equation}
Recently, the first case with $e=3$, $r \geq 3$ 
was solved by Glock \cite{Glock:2019}: 
$f_3(n,5,3) = (\frac{1}{5}+o(1)) n^2$. 
His result was generalized 
by Shangguan and Tamo \cite{Shangguan:2019} to 
\[
  f_r(n,3r-4,3) \; = \; 
  \left(\frac{1}{r^2-r-1} + o(1)\right) \: n^2 
    \;\;\;\;{\rm as}\;\; n \to \infty \: .
\]
Shangguan and Tamo proved the upper bound
\begin{equation*}%\label{eq:ST_upper}
  f_r(n,3r-2k,3) \; \leq \; 
  \left(\frac{1}{\binom{r}{k} - \frac{1}{2}} + o(1)\right)
  \binom{n}{k}
    \;\;\;\;{\rm as}\;\; n \to \infty \: ,
\end{equation*}
which in the case $e=3$, $\: v=r+2$ becomes
\[
  f_r(n,r+2,3) \; \leq \;
  \left(\frac{1}{r - \frac{1}{2}} + o(1)\right) \binom{n}{r-1} 
    \;\;\;\;{\rm as}\;\; n \to \infty \: .
\]
A lower bound, 
obtained from the probabilistic construction of 
Brown, Erd\H{o}s, and S\'{o}s \cite{Brown:1973},
was explicitly calculated in \cite{Shangguan:2019}: 
\[
  f_r(n,r+2,3) \; \geq \;
  \left(\frac{1}{r} 
    \binom{\frac{1}{2}(r+2)(r+1)}{3}^{-1/2} \! + o(1)\right) 
  \binom{n}{r-1} 
    \;\;\;\;{\rm as}\;\; n \to \infty \: .
\]
%It follows from our \cref{th:main} that 
We will prove a much better lower bound 
which almost matches the upper one: 
\[
  f_r(n,r+2,3) \; \geq \;
  \left(\frac{1}{r} - o(1)\right) \binom{n}{r-1} 
    \;\;\;\;{\rm as}\;\; n \to \infty \: .
\]

Answering a question of Erd\H{o}s \cite{Erdos:1976}, 
two sets of authors in \cite{Bohman:2019} and \cite{Glock:2018} 
independently 
proved that 
for any $m \geq 2$, 
\[
  \mathrm{ex}\Big(n,\: {\mathcal F}_3(4,2) \cup 
                        {\mathcal F}_3(5,3) \cup \ldots \cup 
                        {\mathcal F}_3(m+2,m) \Big) 
  \; = \; \left(\frac{1}{3} + o(1)\right) \binom{n}{2} \; .
\]
It was suggested in \cite[Conjecture~7.2]{Glock:2018} 
that a similar statement must hold 
for all $r \geq 3$, $m \geq 2$, 
namely 
\[
  \mathrm{ex}\Big(n,   {\mathcal F}_r(r+1,2) \cup 
                        {\mathcal F}_r(r+2,3) \cup \ldots \cup 
                        {\mathcal F}_r(r+m-1,m) \Big) 
  = \left(\frac{1}{r} + o(1)\right) \binom{n}{r-1} .
\]
We will prove this statement for $m=3$ 
by constructing
${\mathcal F}_r(r+2,3)$-free 
(or, in terms of \cite{Glock:2018}, $3$-sparse) 
partial Steiner $(r-1,r,n)$-systems 
with $(\frac{1}{r} - o(1)) \binom{n}{r-1}$ blocks (edges). 
The case $r=3$ is trivial, 
because any ${\mathcal F}_3(4,2)$-free system 
is also ${\mathcal F}_3(5,3)$-free. 
We will provide different constructions for 
$r=4$ (\cref{th:6_3}) 
and $r \geq 5$ (\cref{th:r+2_3}).

We define the {\it degree} of a subset of vertices 
in an $r$-graph 
as the number of edges that contain this subset.

\begin{theorem}\label{th:6_3}
For any even $n$, there exists an 
$({\mathcal F}_4(5,2) \cup {\mathcal F}_4(6,3))$-free 
$4$-graph with $n$ vertices and 
$\frac{1}{4} \binom{n}{3} - \frac{n(n-2)}{8}$ edges.
\end{theorem}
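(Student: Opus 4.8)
\emph{Plan.}
The first step is to restate the hypotheses combinatorially. Two distinct $4$-edges spanning at most $5$ vertices must intersect in at least $3$ vertices, so an $\mathcal F_4(5,2)$-free $4$-graph is exactly a partial Steiner $(3,4,n)$-system: every $3$-set lies in at most one edge. Assume now such a system contains three distinct edges $e_1,e_2,e_3$ with $|e_1\cup e_2\cup e_3|\le 6$. Linearity gives $|e_a\cap e_b|\le 2$, hence by inclusion--exclusion $6\ge|e_1\cup e_2\cup e_3|=12-\sum_{a<b}|e_a\cap e_b|+|e_1\cap e_2\cap e_3|\ge 12-6+0$, so equality holds throughout: each pairwise intersection is a $2$-set, the triple intersection is empty, and the three $2$-sets $e_a\cap e_b$ are pairwise disjoint. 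Thus the only forbidden configuration $\mathcal C$ is: three pairwise disjoint pairs $Q_1,Q_2,Q_3$ of vertices, with edges $Q_1\cup Q_2$, $Q_1\cup Q_3$ and $Q_2\cup Q_3$. Since $\frac14\binom n3-\frac{n(n-2)}8=2\binom{n/2}{3}$, the goal is a $\mathcal C$-free partial Steiner $(3,4,n)$-system with $2\binom m3$ edges, where $m:=n/2$.

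\emph{Construction.}
Put $V=\{1,\dots,m\}\times\{0,1\}$, write $p_i=\{(i,0),(i,1)\}$, and let $M=\{p_1,\dots,p_m\}$, a perfect matching. For $m\le 3$ the required number of edges is at most $2$ and a valid system is immediate (e.g.\ $p_1\cup p_2$ and $p_2\cup p_3$ when $m=3$, which is $\mathcal C$-free simply because it has only two edges). For $m\ge 4$ every edge will be an $M$-\emph{transversal}: it contains one vertex from each of four pairs, hence is coded by a $4$-set $T\subseteq\{1,\dots,m\}$ together with a labelling $g\in\Z_2^T$, the edge being $\{(i,g_i):i\in T\}$, and we use only labellings of even weight. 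The construction data is a choice, for every $4$-set $T$, of a set $L_T$ of even-weight words in $\Z_2^T$ subject to the \emph{partition property}: for every $3$-set $S$, the restricted families $L_T|_S$, over the $m-3$ supersets $T\supseteq S$, are pairwise disjoint and together fill $\Z_2^S$. Even weight forces any two words agreeing on a $3$-set to agree everywhere, so no two edges meet in three vertices -- agreement within a single $L_T$ is impossible, and agreement across two families $L_T|_S$, $L_{T'}|_S$ is ruled out by the partition property -- so the system is a partial Steiner $(3,4,n)$-system. Its edges cover exactly the $3$-sets disjoint from $M$; there are $\binom n3-\frac{n(n-2)}2$ such $3$-sets, each covered once, so the system has $\frac14\binom n3-\frac{n(n-2)}8$ edges, as required. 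Finally, because all edges are $M$-transversals, a copy of $\mathcal C$ must occupy six distinct pairs $p_{i_1},\dots,p_{i_6}$, with a labelling $\alpha$ on those six coordinates and a split of $\{i_1,\dots,i_6\}$ into three ``duos'' whose three pairwise unions $T$ each satisfy $\alpha|_T\in L_T$; hence it suffices to choose the $L_T$ so that no such configuration arises.

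\emph{Main obstacle.}
The inclusion--exclusion reduction, the partial-Steiner verification, and the edge count are routine. The real work is to exhibit, for every $m\ge 4$, families $L_T$ that simultaneously (a) satisfy the rigid partition property -- a packing-type constraint linking, for each $3$-set $S$, all $m-3$ of its $4$-supersets -- and (b) are ``spread out'' enough that $\mathcal C$ never appears, i.e.\ no six pairs admit a labelling compatible with all three duo-unions of some tripartition. These conditions pull in opposite directions: blowing up a Steiner quadruple system by the full even-weight code makes (a) effortless when $m\equiv 2,4\pmod 6$ but \emph{fails} (b) (already at $m=8$, via the affine space $AG(3,2)$), so one must instead build the $L_T$ by hand -- presumably by an explicit, recursive assignment that reduces $m$ to smaller values, with small base cases supplied by a linear-algebraic choice of even-weight codes -- while keeping the total equal to $2\binom m3$ on the nose. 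Reconciling (a) and (b) uniformly over all even $n$ is, I expect, the crux; the case $m=3$ is the only one that falls outside the transversal framework, and it is disposed of by hand as above.
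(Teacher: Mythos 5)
Your reduction of the forbidden family (linearity plus the single configuration $\mathcal{C}$ of three disjoint pairs) and the identity $\frac14\binom{n}{3}-\frac{n(n-2)}{8}=2\binom{n/2}{3}$ are correct, but the argument stops exactly where the theorem begins: you never exhibit the families $L_T$, and you say yourself that reconciling the partition property (a) with $\mathcal{C}$-freeness (b) is the crux you expect to be hard. Leaving the construction as ``presumably an explicit, recursive assignment'' is a genuine gap, not a routine omission --- the entire content of the statement is that such a system exists.

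Moreover, the framework you fixed cannot work for all even $n$. Requiring every $M$-transversal triple to be covered exactly once (your partition property) forces, for each vertex $v$, the number of edges through $v$ to be $\frac{2(m-1)(m-2)}{3}$: there are $4\binom{m-1}{2}=2(m-1)(m-2)$ transversal triples through $v$, and each edge through $v$ covers exactly $3$ of them. Hence $3\mid(m-1)(m-2)$ is necessary, which fails whenever $m\equiv 0\pmod 3$ (e.g.\ $n=12$, $m=6$). So for a third of the even values of $n$ no choice of the $L_T$ satisfies (a) at all: the uncovered triples cannot be exactly the triples meeting the matching, and the approach would need a different uncovered-triple structure, not merely a cleverer choice of labellings. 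The paper sidesteps all of this with a one-line algebraic construction: take vertex set $\Z_n$ and let a $4$-set be an edge when its element sum is $\equiv 1 \pmod n$. Then every triple automatically has degree at most $1$, exactly $\frac{n(n-2)}{2}$ triples have degree $0$ (giving the stated edge count), and a copy of $\mathcal{C}$ would force $3\equiv 2s\pmod n$ for some $s$, impossible since $n$ is even.
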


\begin{proof}[\bf{Proof}]
Consider a $4$-graph $H(n)$ 
whose vertices are elements of $\Z_n$, 
and $4$ distinct vertices $i,j,k,l$ form an edge when 
$i+j+k+l \equiv 1 \pmod{n}$. 
Then the degree of a triple $\{i,j,k\}$ is either $1$ 
(when $i+j+2k,\: i+2j+k,\: 2i+j+k \not\equiv 1 \pmod{n})$, 
or $0$. 
For each $i\in\Z_n$, 
there are exactly $\frac{1}{2}(n-2)$ pairs $\{j,k\}$ such that 
$j \neq i$, $k \neq i$, $j \neq k$, 
and $2i+j+k \equiv 1 \pmod{n}$. 
Hence, the number of triples of degree $0$ is
$\frac{n(n-2)}{2}$, 
and consequently, the number of edges in $H(n)$ is 
$\frac{1}{4}\Huge(\binom{n}{3} - \frac{n(n-2)}{2}\Huge)$. 

The only $4$-graph in 
${\mathcal F}_4(5,2) \cup {\mathcal F}_4(6,3)$ 
without triples of degree $\geq 2$ 
is formed by $3$ edges $A,B,C$ such that 
$A = D \cup E$, $B = D \cup F$, $C = E \cup F$, 
where $D,E,F$ are disjoint pairs of vertices. 
If they were present in $H(n)$, we would get 
\[
  1+1+1 \; \equiv \; 
  \sum_{i \in A} i + \sum_{i \in B} i + \sum_{i \in C} i
  \; \equiv \: 2 
  \!\!\!\! \sum_{i \in D \cup E \cup F}\! i  \pmod{n} \: ,
\]
which is impossible (as $n$ is even).
\end{proof}

\begin{proposition}\label{th:2_d}
When $r>4$ and $n=2^d$, 
there exists an 
$({\mathcal F}_r(r+1,2) \cup {\mathcal F}_r(r+2,3))$-free 
$r$-graph with $n$ vertices and 
$\frac{1}{r} \binom{n}{r-1} - O(n^{r-2})$ edges.
\end{proposition}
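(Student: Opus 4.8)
The plan is to take the vertex set to be the group $\mathbb{F}_2^d$ (so that $n=2^d$) and to let the edges be, essentially, the $r$-subsets whose elements sum to a fixed value. Fix $c\in\mathbb{F}_2^d$ and let $G_0$ be the $r$-graph on $\mathbb{F}_2^d$ whose edges are all $r$-subsets $A$ with $\sum_{v\in A}v=c$. Since any $r-1$ vertices of an edge determine the last one, every $(r-1)$-set has degree $0$ or $1$; hence $G_0$ is a partial Steiner $(r-1,r,n)$-system, in particular $\mathcal{F}_r(r+1,2)$-free. Counting ordered $(r-1)$-tuples of distinct vertices (the $r$-th vertex being forced, and the number of tuples for which that forced vertex collides with a chosen one being $O(n^{r-2})$) gives $|E(G_0)|=\frac1r\binom{n}{r-1}-O(n^{r-2})$.

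The heart of the argument is to classify triples of distinct edges spanning at most $r+2$ vertices. If $A,B$ are distinct edges then $\sum_{v\in A\triangle B}v=0$, and since no two-element subset of $\mathbb{F}_2^d$ sums to $0$, we get $|A\cap B|\le r-2$, hence $|A\cup B|\ge r+2$. Consequently, if three distinct edges $A,B,C$ have $|A\cup B\cup C|\le r+2$, then in fact $|A\cup B\cup C|=r+2$ and all three pairwise intersections have size exactly $r-2$; inclusion--exclusion then gives $|A\cap B\cap C|=(r+2)-3r+3(r-2)=r-4$. Writing $U=A\cup B\cup C$ and $\overline X=U\setminus X$, each of $\overline A,\overline B,\overline C$ is a $2$-set, they are pairwise disjoint (since $A\cup B=B\cup C=A\cup C=U$), and together they partition $U\setminus(A\cap B\cap C)$. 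Moreover $\sum_{v\in A}v=\sum_{v\in B}v$ is equivalent to $\sum_{v\in\overline A}v=\sum_{v\in\overline B}v$, so $\overline A,\overline B,\overline C$ have a common (necessarily nonzero) sum, and then $\sum_{v\in A}v=\sum_{v\in A\cap B\cap C}v$, which equals $c$. Thus every such configuration arises from an $(r-4)$-set $S$ with $\sum_{v\in S}v=c$ together with three pairwise disjoint $2$-sets, all avoiding $S$ and sharing a common sum, the three edges being $S$ together with two of those three pairs; in particular each of $A,B,C$ contains a $4$-element subset summing to $0$, namely the complement of $S$ inside that edge.

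Given this, the fix is clear: let $G$ be obtained from $G_0$ by deleting every edge that contains a $4$-element subset summing to $0$. By the classification, $G$ has no three edges on at most $r+2$ vertices, so $G$ is $\mathcal{F}_r(r+2,3)$-free, and it is still $\mathcal{F}_r(r+1,2)$-free since deleting edges preserves that. Finally one bounds the number of deleted edges: such an edge is $A=T\sqcup S$ with $|T|=4$, $\sum_{v\in T}v=0$, $|S|=r-4$, $\sum_{v\in S}v=c$, and $T$ is determined by any three of its vertices while $S$ is determined by any $r-5$ of its vertices, so the number of such pairs $(T,S)$, and hence the number of deleted edges, is $O(n^3)\cdot O(n^{r-5})=O(n^{r-2})$. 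Therefore $|E(G)|=\frac1r\binom{n}{r-1}-O(n^{r-2})$ and $G$ is $(\mathcal{F}_r(r+1,2)\cup\mathcal{F}_r(r+2,3))$-free.

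The main obstacle is the structural classification in the second step: one must be certain that a triple of edges on at most $r+2$ vertices can only take the form described, so that deleting the edges containing a sum-$0$ quadruple destroys all of them; the rest is routine counting. It is worth noting that, unlike the case $r=4$ of \cref{th:6_3} (where taking $c\ne0$ already excludes the bad configuration), for $r>4$ there exist $(r-4)$-subsets of every possible sum, so $G_0$ itself is generally not $\mathcal{F}_r(r+2,3)$-free and the deletion step is genuinely needed.
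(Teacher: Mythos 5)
Your proposal is correct and is essentially the paper's own construction: the paper also works in $\Z_2^d$, takes as edges the zero-sum $r$-sets that contain no zero-sum subset of size $4$ or $r-4$ (for a zero-sum $r$-set these exclusions are equivalent to your deletion of edges containing a zero-sum quadruple), and rules out three edges on $r+2$ points by the same complement analysis showing the common $(r-4)$-element intersection would be zero-sum. Fixing a general target sum $c$ and phrasing the exclusion as a deletion step from the sum-$c$ design, rather than building it into the edge definition, is only a cosmetic difference.
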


\begin{proof}[\bf{Proof}]
Consider an $r$-graph $H_d$ 
whose vertices are elements of $\Z_2^d$, 
where a set $A$ of $r$ elements is an edge 
if the sum of them is zero,
and $A$ does not contain zero-sum subsets of sizes $4$ and $r-4$.
It is easy to see that 
the degree of any $(r-1)$-subset of vertices is at most $1$. 
An $(r-1)$-subset has degree $0$ only when 
it contains a zero-sum subset of size $r-2$, $r-4$ or $4$. 
For a fixed $k$, the number of zero-sum subsets of size $k$ 
is $\frac{1+o(1)}{n}\binom{n}{k}$, 
so the number of $(r-1)$-subsets that contain 
a zero-sum subset of size $k$ is at most 
$\frac{1+o(1)}{n}\binom{n}{k}\binom{n-k}{r-1-k} = O(n^{r-2})$. 
Hence, the number of $(r-1)$-subsets of degree $0$ is 
$O(n^{r-2})$, and the number of edges in $H_d$ is 
$\frac{1}{r}\left(\binom{n}{r-1} - O(n^{r-2})\right)$. 
Obviously, $H_d$ is ${\mathcal F}_r(r+1,2)$-free. 
We need to show that $H_d$ is ${\mathcal F}_r(r+2,3)$-free.
Suppose, there exist an $(r+2)$-subset $X$ 
and $3$ distinct edges $A,B,C \subset X$. 
Since 
$|A \cup B|=|A|+|B|-|A \cap B| \geq 2r - (r-2) = r+2 = |X|$, 
we get $A \cup B = X$, and similarly, $A \cup C = B \cup C = X$.
Set 
$\overline{A}=X-A$,\: 
$\overline{B}=X-B$,\: 
$\overline{C}=X-C$. 
Then $|\overline{A}|=|\overline{B}|=|\overline{C}|=|X|-r=2$, 
and $\overline{A}$, $\overline{B}$, $\overline{C}$ 
are pairwise disjoint, because 
$\overline{A} \cap \overline{B} = X - (A \cup B) = \emptyset$. 
Set $D = X- (\overline{A}\cup\overline{B}\cup\overline{C})$. 
Then 
$A = D \cup \overline{B} \cup \overline{C}$,\: 
$B = D \cup \overline{A} \cup \overline{C}$,\: 
$C = D \cup \overline{A} \cup \overline{B}$,\: 
$|D| = (r+2) - 3 \cdot 2 = r-4$,
and we get 
\[
  0 \; = \; \sum_{z \in A} z \: 
  + \: \sum_{z \in B} z \: 
  + \: \sum_{z \in C} z
  \; = \; 2 \!\!\!\! \sum_{z \in \overline{A} \cup \overline{B} \cup \overline{C}} \!\!\!\!\; z 
    \: + \: 3 \sum_{z \in D} z 
      \; = \; \sum_{z \in D} z \; ,
\]
so $D$ is a zero-sum subset of size $r-4$ in edge $A$, 
a contradiction.
\end{proof}

\begin{proposition}\label{th:m2_d}
When $r>4$ and $n = m 2^d$, there exists an 
$({\mathcal F}_r(r+1,2) \cup {\mathcal F}_r(r+2,3))$-free 
$r$-graph $H_{m,d}$ with $n$ vertices and 
$(\frac{1}{r} - O(\frac{1}{2^{d}})) \binom{n}{r-1}$ edges.
\end{proposition}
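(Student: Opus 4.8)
The plan is to build $H_{m,d}$ by lifting the $r$-graph $H_d$ of \cref{th:2_d} along one extra coordinate. Take the vertex set to be $\Z_2^d\times\Z_m$ (which has $n=m2^d$ vertices), and for a vertex $v=(x,p)$ write $x(v)=x$ and $p(v)=p$. Recall that the edges of $H_d$ are precisely the $r$-element subsets of $\Z_2^d$ whose elements sum to $0$ and which contain no zero-sum subset of size $4$ or $r-4$. Declare an $r$-subset $A$ of $\Z_2^d\times\Z_m$ to be an edge of $H_{m,d}$ exactly when $\{x(v):v\in A\}$ is an edge of $H_d$ (so the coordinates $x(v)$, $v\in A$, are automatically distinct) and $\sum_{v\in A}p(v)=0$ in $\Z_m$. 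The single linear constraint on the $\Z_m$-coordinates is there to restore the partial Steiner property after the lift, and the point of the construction is that $3$-sparseness is inherited from $H_d$ for free.

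First I would count the edges. An edge of $H_{m,d}$ corresponds bijectively to a choice of an edge $S$ of $H_d$ together with a map $x\mapsto p_x$ from $S$ to $\Z_m$ satisfying $\sum_{x\in S}p_x=0$; there are exactly $m^{r-1}$ such maps, so $|E(H_{m,d})|=m^{r-1}\,|E(H_d)|$. Substituting $|E(H_d)|=\frac1r\binom{2^d}{r-1}-O\big((2^d)^{r-2}\big)$ from \cref{th:2_d}: the main term $\frac{m^{r-1}}{r}\binom{2^d}{r-1}$ equals $\frac1r\binom{n}{r-1}$ up to an error $O\big(m\,n^{r-2}\big)$, and the lower-order term contributes $O\big(m^{r-1}(2^d)^{r-2}\big)=O\big(m\,n^{r-2}\big)$; since $m\,n^{r-2}=\Theta\big(2^{-d}\binom{n}{r-1}\big)$, this yields $|E(H_{m,d})|=\big(\frac1r-O(2^{-d})\big)\binom{n}{r-1}$.

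Next I would check that $H_{m,d}$ is $\mathcal F_r(r+1,2)$-free, i.e.\ a partial Steiner $(r-1,r,n)$-system. Suppose distinct edges $A$ and $B$ share $r-1$ vertices. Their common $\Z_2^d$-coordinates form an $(r-1)$-subset $T$ of $\Z_2^d$ contained in both edges $\{x(v):v\in A\}$ and $\{x(v):v\in B\}$ of $H_d$; since $H_d$ is a partial Steiner system, $T$ lies in at most one edge of $H_d$, so these two edges coincide, which forces the unique vertex of $A$ not in $B$ and the unique vertex of $B$ not in $A$ to have equal $\Z_2^d$-coordinate. Applying the constraint $\sum p(v)=0$ to both $A$ and $B$ then forces these two vertices to have equal $\Z_m$-coordinate as well, hence they coincide, contradicting $A\ne B$.

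Finally, for $\mathcal F_r(r+2,3)$-freeness I would repeat the combinatorial part of the proof of \cref{th:2_d}. If $A,B,C$ are three distinct edges on a set $X$ of $r+2$ vertices, then, using the partial Steiner property just established, the complements $\overline A=X\setminus A$, $\overline B=X\setminus B$, $\overline C=X\setminus C$ are pairwise disjoint $2$-sets, $D:=X\setminus(\overline A\cup\overline B\cup\overline C)$ has size $r-4\ge1$, and $A=D\cup\overline B\cup\overline C$, and analogously for $B$ and $C$. Summing the $\Z_2^d$-coordinates over $A$, $B$ and $C$ gives $0$ on one hand, and on the other the terms over $\overline A,\overline B,\overline C$ appear with coefficient $2$ and vanish while the terms over $D$ appear with coefficient $3\equiv1$, so $\sum_{v\in D}x(v)=0$; but $D\subseteq A$, so the $\Z_2^d$-coordinates of the vertices of $D$ are pairwise distinct and form a zero-sum subset of size $r-4$ inside the $H_d$-edge $\{x(v):v\in A\}$, contradicting its defining property. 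I do not anticipate a real obstacle here: the delicate points are only keeping the error in the edge count at $O(2^{-d})\binom{n}{r-1}$, and noting that the hypothesis $r>4$ is precisely what makes $|D|=r-4\ge1$, so that $D$ is a genuine nonempty zero-sum subset — which is why $r=4$ requires the separate argument of \cref{th:6_3}.
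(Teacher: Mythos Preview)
Your construction is exactly the paper's: vertices $\Z_2^d\times\Z_m$, an $r$-set is an edge when its $\Z_2^d$-projection is an edge of $H_d$ and its $\Z_m$-coordinates sum to $0$. Your edge count and partial-Steiner verification match the paper's (you spell the latter out; the paper just says ``it is easy to see'').

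The one genuine difference is in the $\mathcal F_r(r+2,3)$-free step. The paper uses the $\mathcal F_r(r+2,3)$-freeness of $H_d$ as a black box: since the three shadows cannot be three distinct $H_d$-edges on $\le r+2$ points, two shadows coincide; a short $\Z_m$-coordinate argument then forces $|A\cup B|=r+2$, pins the third shadow as well, and reaches a contradiction by counting in how many positions $C$ can differ from $A$ and $B$. You instead bypass the shadow analysis entirely and rerun the zero-sum argument of \cref{th:2_d} in the $\Z_2^d$ coordinate: from the decomposition $X=D\cup\overline A\cup\overline B\cup\overline C$ you derive $\sum_{v\in D}x(v)=0$, and since the $x$-coordinates on $A$ are distinct this gives a forbidden zero-sum $(r-4)$-subset inside the $H_d$-edge $\{x(v):v\in A\}$. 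Both arguments are correct; yours is shorter and uses only the defining property of $H_d$-edges rather than the conclusion of \cref{th:2_d}, while the paper's is more modular in that it would transfer verbatim to any lift of an $\mathcal F_r(r+2,3)$-free base system along a group coordinate with a zero-sum constraint.
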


\begin{proof}[\bf{Proof}]
The case $m=1$ is covered by \cref{th:2_d}. 
Assume $m\geq 2$ and consider an $r$-graph $H_{m,d}$
whose vertices are elements of $\Z_m\oplus\Z_2^d$, 
and $A=\{(x_1,y_1),(x_2,y_2),\ldots,(x_r,y_r)\}$ 
is an edge in $H_{m,d}$ 
when $x_1+x_2+\ldots+x_r=0$ in $\Z_m$ and
$A'=\{y_1,y_2,\ldots,y_r\}$ is an edge 
in $r$-graph $H_d$ from \cref{th:2_d}. 
(We say that $A'$ is the {\it shadow} of $A$ in $H_d$.) 
The number of edges in $H_{m,d}$ 
is $m^{r-1}$ times that of $H_d$. 
It is easy to see that $H_{m,d}$ is 
${\mathcal F}_r(r+1,2)$-free. 
We need to check that it is also 
${\mathcal F}_r(r+2,3)$-free. 
Suppose, there exist three distinct edges $A,B,C$ 
such that $|A \cup B \cup C| \leq r+2$. 
As $H_d$ is ${\mathcal F}_r(r+2,3)$-free, 
at least two of these three edges 
must have the same shadow in $H_d$. 
Let $A=\{(x_1',y_1),(x_2',y_2),\ldots,(x_r',y_r)\}$ 
and $B=\{(x_1'',y_1),(x_2'',y_2),\ldots,(x_r'',y_r)\}$. 
Since $x_1'+x_2'+\ldots+x_r'=0$ 
and $x_1''+x_2''+\ldots+x_r''=0$, 
vectors $(x_1',x_2',\ldots,x_r')$ 
and $(x_1'',x_2'',\ldots,x_r'')$ 
must differ in at least two places. 
Since $|A \cup B| \leq r+2$, 
the same vectors 
must differ in at most two places. 
Therefore, they differ in exactly two places, 
and $|A \cup B| = r+2$. 
Then $C$ must have the same shadow as $A$ and $B$: 
$C=\{(x_1''',y_1),(x_2''',y_2),\ldots,(x_r''',y_r)\}$. 
Similarly, $(x_1''',x_2''',\ldots,x_r''')$ 
must differ in exactly two places from 
$(x_1',x_2',\ldots,x_r')$ 
and from $(x_1'',x_2'',\ldots,x_r'')$. 
As $|A \cup B \cup C| = r+2$, 
it must be the same places where $A$ and $B$ differ,
but this is impossible.
\end{proof}

\begin{theorem}\label{th:r+2_3}
When $r>4$, there exists an 
$({\mathcal F}_r(r+1,2) \cup {\mathcal F}_r(r+2,3))$-free 
$r$-graph with $\leq n$ vertices and 
$(\frac{1}{r} - O(\frac{1}{\sqrt{n}})) \binom{n}{r-1}$ edges 
as $n\to\infty$.
\end{theorem}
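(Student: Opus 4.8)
The plan is to derive this from \cref{th:m2_d} by a simple rounding argument: given a large $n$, I would approximate it from below by an integer of the form $m2^d$ with $2^d$ as close as possible to $\sqrt{n}$, so that both sources of loss are controlled by $1/\sqrt{n}$.

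Concretely, set $d = \lfloor \tfrac12 \log_2 n \rfloor$, so that $\tfrac12\sqrt{n} < 2^d \le \sqrt{n}$, and then put $m = \lfloor n/2^d \rfloor$ and $n' = m2^d$. For $n$ large we have $m \ge \lfloor\sqrt{n}\rfloor \ge 2$, and by construction $n' \le n$ with $n - n' < 2^d \le \sqrt{n}$. Applying \cref{th:m2_d} with these values of $m$ and $d$ produces an $({\mathcal F}_r(r+1,2) \cup {\mathcal F}_r(r+2,3))$-free $r$-graph $H_{m,d}$ on $n'$ vertices with $\bigl(\tfrac1r - O(2^{-d})\bigr)\binom{n'}{r-1}$ edges; since $2^d > \tfrac12\sqrt{n}$, the error term is $O(1/\sqrt{n})$, so $H_{m,d}$ has $\bigl(\tfrac1r - O(1/\sqrt{n})\bigr)\binom{n'}{r-1}$ edges.

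It remains to compare $\binom{n'}{r-1}$ with $\binom{n}{r-1}$. Writing $\binom{n'}{r-1}/\binom{n}{r-1} = \prod_{i=0}^{r-2}\frac{n'-i}{n-i}$ and using $0 \le n - n' < \sqrt{n}$ together with the fact that $r-1$ is a constant, each of the $r-1$ factors is $1 - O(1/\sqrt{n})$, so $\binom{n'}{r-1} = \bigl(1 - O(1/\sqrt{n})\bigr)\binom{n}{r-1}$. Substituting this into the edge count and multiplying out the two $\bigl(1 - O(1/\sqrt{n})\bigr)$-type factors, $H_{m,d}$ turns out to have $\bigl(\tfrac1r - O(1/\sqrt{n})\bigr)\binom{n}{r-1}$ edges on $n' \le n$ vertices, which is exactly the asserted bound.

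Since the argument is purely arithmetic, there is no genuine obstacle. The one point deserving care is that the implied constant in the $O(2^{-d})$ term of \cref{th:m2_d} must depend only on $r$, and not on $m$ or $d$ — this can be confirmed by tracing the estimates in the proofs of \cref{th:2_d,th:m2_d}, where the main term $m^{r-1}\binom{2^d}{r-1}$ agrees with $\binom{m2^d}{r-1}$ up to a relative error of order $2^{-d}$ with an $r$-dependent constant — so that it is legitimate to replace it by a single $O(1/\sqrt{n})$ bound valid uniformly for all large $n$.
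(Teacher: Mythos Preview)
Your proof is correct and follows essentially the same approach as the paper: choose $d=\lfloor \tfrac12\log_2 n\rfloor$, set $m=\lfloor n/2^d\rfloor$, apply \cref{th:m2_d} on $m2^d\le n$ vertices, and absorb both the $O(2^{-d})$ error and the $\binom{m2^d}{r-1}/\binom{n}{r-1}$ ratio into a single $O(1/\sqrt{n})$. Your write-up is in fact a bit more explicit than the paper's about the lower bound $2^d>\tfrac12\sqrt n$ and about the uniformity of the implied constant in \cref{th:m2_d}, which is a fair point to flag.
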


\begin{proof}[\bf{Proof}]
For a given $n$, select $d=\lfloor\log_2\sqrt{n}\rfloor$ 
and $m=\lfloor n/2^d\rfloor$. 
Then $0 \leq n - m 2^d < 2^d \leq \sqrt{n}$. 
By \cref{th:m2_d}, there exists an 
$({\mathcal F}_r(r+1,2) \cup {\mathcal F}_r(r+2,3))$-free 
$r$-graph with $m 2^d$ vertices and 
$(\frac{1}{r} - O(\frac{1}{2^{d}})) \binom{m2^d}{r-1}$ edges. 
Notice that $\frac{1}{2^{d}} = O(\frac{1}{\sqrt{n}})$ and 
$\binom{m2^d}{r-1} = (1 - O(\frac{1}{\sqrt{n}})) \binom{n}{r-1}$ 
as $n\to\infty$.
\end{proof}

Any ${\mathcal F}_r(r+1,2)$-free $r$-graph 
has at most $\frac{1}{r} \binom{n}{r-1}$ edges 
(because every $(r-1)$-subset of vertices 
has degree at most $1$). 
Hence, \cref{th:6_3,th:r+2_3} imply

\begin{corollary}\label{th:main}
When $r \geq 3$ and $n\to\infty$, 
\[
  \mathrm{ex}\Big(n,\: {\mathcal F}_r(r+1,2) \cup 
                     {\mathcal F}_r(r+2,3)\Big)
    \; = \;
  \left(\frac{1}{r}-o(1)\right)\binom{n}{r-1}
    \; .
\]
\end{corollary}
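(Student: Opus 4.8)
The plan is to combine the upper bound recorded above with the constructions of this note. Every $({\mathcal F}_r(r+1,2)\cup{\mathcal F}_r(r+2,3))$-free $r$-graph is in particular ${\mathcal F}_r(r+1,2)$-free, so each of its $(r-1)$-subsets of vertices lies in at most one edge; counting incidences between edges and their $(r-1)$-subsets gives $r\,|E|\le\binom{n}{r-1}$, whence $\mathrm{ex}\big(n,{\mathcal F}_r(r+1,2)\cup{\mathcal F}_r(r+2,3)\big)\le\frac1r\binom{n}{r-1}$ for all $r\ge2$ and all $n$. It then remains to exhibit, for each $r\ge3$, an $({\mathcal F}_r(r+1,2)\cup{\mathcal F}_r(r+2,3))$-free $r$-graph on $n$ vertices with $\big(\frac1r-o(1)\big)\binom{n}{r-1}$ edges.

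For $r=3$ I would argue that linearity already suffices: if a $3$-graph is ${\mathcal F}_3(4,2)$-free, then any two of its edges span at least $5$ vertices, and any third edge meets their union in at most $2$ vertices and hence contributes at least one further vertex, so three edges always span at least $6$ vertices; thus such a graph is automatically ${\mathcal F}_3(5,3)$-free. An asymptotically optimal partial Steiner triple system, which has $\big(\frac13-o(1)\big)\binom{n}{2}$ edges by \eqref{eq:packing}, then supplies the desired lower bound.

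For $r\ge4$ I would appeal to the explicit constructions proved here. When $r=4$ and $n$ is even, \cref{th:6_3} yields a graph with $\frac14\binom{n}{3}-\frac{n(n-2)}{8}=\big(\frac14-o(1)\big)\binom{n}{3}$ edges; for odd $n$, apply \cref{th:6_3} with $n-1$ vertices and add one isolated vertex, which alters the edge count by $O(n^2)=o\big(\binom{n}{3}\big)$ only. When $r\ge5$, \cref{th:r+2_3} supplies such an $r$-graph on at most $n$ vertices with $\big(\frac1r-O(n^{-1/2})\big)\binom{n}{r-1}$ edges; padding with isolated vertices to reach exactly $n$ vertices does not change the edge count (no new edge is created), and $O(n^{-1/2})=o(1)$. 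Hence in every case $\mathrm{ex}\big(n,{\mathcal F}_r(r+1,2)\cup{\mathcal F}_r(r+2,3)\big)\ge\big(\frac1r-o(1)\big)\binom{n}{r-1}$, and combined with the upper bound this is exactly the statement of the corollary.

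There is no real obstacle in the corollary itself — it is bookkeeping once \cref{th:6_3,th:r+2_3} (and behind them \cref{th:2_d,th:m2_d}) are established. The only two points requiring a line of care are the reduction for $r=3$ (checking that ${\mathcal F}_3(4,2)$-freeness forces ${\mathcal F}_3(5,3)$-freeness) and the divisibility clean-up for $r\ge4$ (passing from $n$ to $n-1$, or to $\lfloor n/2^d\rfloor\,2^d$, and confirming that the $O(\sqrt n)$ vertices and the edges discarded with them amount to only an $o(1)$ fraction of $\binom{n}{r-1}$). Both are routine.
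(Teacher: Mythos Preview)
Your proposal is correct and follows essentially the same approach as the paper: the upper bound via the $(r-1)$-degree count, the $r=3$ case by observing that ${\mathcal F}_3(4,2)$-freeness forces ${\mathcal F}_3(5,3)$-freeness (then invoking \eqref{eq:packing}), and the cases $r=4$ and $r\ge 5$ by \cref{th:6_3} and \cref{th:r+2_3} respectively. You spell out the parity and vertex-padding clean-up more explicitly than the paper does, but these are exactly the routine details the paper leaves implicit.
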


Simplicity of our constructions of approximate Steiner 
$(r-1,r,n)$-systems 
(comparing with R\"{o}dl's probabilistic proof 
of \cref{eq:packing}) 
can be explained by the fact that their main parameters, 
$r-1$ and $r$, differ just by $1$.

\vspace{4mm}
{\bf Acknowledgments.}
The author would like to thank two anonymous referees 
for their valuable comments and suggestions.

\end{document}